  \newtheorem{theorem}{Theorem}[section]
  \newtheorem{proposition}[theorem]{Proposition}
  \newtheorem{corollary}[theorem]{Corollary}
  \newtheorem{lemma}[theorem]{Lemma}
  \theoremstyle{definition}
  \newtheorem*{claim*}{Claim}
  \newtheorem{example}[theorem]{Example}
  \newtheorem*{question*}{Question}
  \newtheorem*{answer*}{Answer}
  \newtheorem*{application*}{Application}
  \theoremstyle{remark}
  \newtheorem*{remark*}{Remark}
\newcommand{\QQ}{\mathbb{Q}}
\newcommand{\trunc}{\textup{trunc}}
\newcommand{\aut}{\textup{Aut}(F_r)}
\newcommand{\IA}{\textup{IA}_r}
\newcommand{\mE}{\mathcal{E}}
\newcommand{\ZZ}{\mathbb{Z}}
\newcommand{\comment}[1]{}
\newcommand{\rkg}{\textup{rank}(\mathfrak{g})}
\newcounter{problemnumber}
\begin{document}
%
%
%
%
%
%
  \title    {Linear representations of $\aut$ on the homology of representation varieties.}
  \author   {Yael Algom-Kfir and Asaf Hadari}
  
  \date{\today}

  \begin{abstract} 
Let $G$ be a compact semisimple linear Lie group. We study the action of $\aut$ on the space $H_*(G^r; \QQ)$. We compute the image of this representation and prove that it only depends on the rank of $\mathfrak{g}$. We show that the kernel of this representation is always the Torrelli subgroup $\IA$ of $\aut$. 
  \end{abstract}
  \maketitle

\section{introduction}
Let $G$ be a group. The group $G^{r}$ can be naturally identified with the set of all homomorphisms $\rho: F_r \to G$, where $F_r$ is the free group of rank $r$. As such, the group $\Gamma = \textrm{Aut}(F_r)$ acts on $G^{r}$ by precomposition.  When $G$ is a topological group,  this action can be studied using topological tools. One example is to study the induced action of $\Gamma$ on $H_{*}(G^{r};\mathbb{Q})$. This gives rise to a linear representation of $\Gamma$, which we call $\mathcal{H}(G)$. In this paper we calculate the kernel and isomorphism class of this representation when $G$ is a compact semisimple Lie Group. \\

In \cite{CLM}, Capell, Lee, and Miller carry out a similar calculation, taking a surface group instead of $F_r$, a mapping class group instead of $\Gamma$, and restriciting to the case  on $G = SU(2)$. They find that the kernel of the action is an infinite index subgroup of the Torelli group.  In contrast to this, we find:

\begin{theorem} \label{kernelcalculation} Let $G$ be a compact semisimple Lie group. Then $\ker(\mathcal{H}(G)) = \textrm{IA}_r$, where $\textrm{IA}_r$ is the subgroup of $\Gamma$ that acts trivially on $F_r/[F_r,F_r] $.
\end{theorem}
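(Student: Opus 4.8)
The plan is to pass to cohomology — since $H_*(G^r;\QQ)$ and $H^*(G^r;\QQ)$ are dual $\aut$-representations and hence have the same kernel — and to exploit the fact that the cohomology ring of $G^r$ is generated by an explicit subspace on which $\aut$ acts through its action on the abelianization of $F_r$. Because $G$ is compact, connected and semisimple, Hopf's theorem gives $H^*(G;\QQ)=\Lambda(x_1,\dots,x_l)$, an exterior algebra on $l=\rkg$ primitive generators of odd degree, where primitivity means $\mu^*x_j=x_j\otimes 1+1\otimes x_j$ for the coproduct $\mu^*$ induced by the multiplication $\mu\colon G\times G\to G$. By the Künneth theorem, $H^*(G^r;\QQ)=H^*(G;\QQ)^{\otimes r}$ is the exterior algebra generated by the classes $x_j^{(i)}:=p_i^*x_j$ for $1\le i\le r$, $1\le j\le l$, where $p_i\colon G^r\to G$ is the $i$-th projection. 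Set $V_j:=\operatorname{span}_\QQ\{x_j^{(i)}:1\le i\le r\}$ and $W:=\bigoplus_{j=1}^l V_j$, so that $H^*(G^r;\QQ)=\Lambda(W)$. Each $\phi\in\aut$ induces a continuous self-map $\rho\mapsto\rho\circ\phi$ of $G^r=\Hom(F_r,G)$, hence a ring automorphism $\phi^*$ of $H^*(G^r;\QQ)$; thus the whole representation is determined by the restriction of $\phi^*$ to the algebra generators $W$.

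The heart of the argument is the computation of this action on a generating set of Nielsen transformations. Identify $\rho$ with the tuple $(g_1,\dots,g_r)$, $g_i=\rho(e_i)$. A basis permutation induces the corresponding permutation of the coordinates of $G^r$, and so permutes the generators $x_j^{(i)}$ in the index $i$. The inversion $e_1\mapsto e_1^{-1}$ induces $(g_1,g_2,\dots)\mapsto(g_1^{-1},g_2,\dots)$; as the group inverse induces the antipode, which acts by $-1$ on primitives, it sends $x_j^{(1)}\mapsto -x_j^{(1)}$ and fixes the other generators. The transvection $e_1\mapsto e_1e_2$ induces $(g_1,g_2,\dots)\mapsto(g_1g_2,g_2,\dots)$, whose first coordinate is $\mu\circ(p_1,p_2)$; primitivity of $x_j$ then gives
\[
x_j^{(1)}\longmapsto x_j^{(1)}+x_j^{(2)},\qquad x_j^{(i)}\longmapsto x_j^{(i)}\ \ (i\ge 2).
\]
In every case the induced transformation preserves each $V_j$ and acts on it by exactly the integral matrix describing the action of the same Nielsen move on $F_r^{\mathrm{ab}}\otimes\QQ=\QQ^r$. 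Since the Nielsen moves generate $\aut$ and their images generate $\GL_r(\ZZ)$, the action of $\aut$ on $W$ factors through the natural map $\aut\to\GL_r(\ZZ)$ given by the action on $F_r^{\mathrm{ab}}$, and on each $V_j\cong\QQ^r$ it realizes the standard representation of $\GL_r(\ZZ)$.

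Both inclusions now follow quickly. If $\phi\in\IA$, then $\phi$ acts trivially on $F_r^{\mathrm{ab}}$, hence on each $V_j$ and on $W$; since the ring automorphism $\phi^*$ is determined by its restriction to the generating subspace $W$, it is the identity, so $\IA\subseteq\kernel(\mathcal{H}(G))$. Conversely, if $\phi\in\kernel(\mathcal{H}(G))$, then $\phi^*$ is the identity on $H^*(G^r;\QQ)$, in particular on $W$; because $l=\rkg\ge 1$ and the standard representation of $\GL_r(\ZZ)$ on $\QQ^r$ is faithful, the image of $\phi$ in $\GL_r(\ZZ)$ is trivial, i.e. $\phi\in\IA$.

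The main obstacle — really the only nonroutine point — is the primitivity computation underlying the transvection case, together with the verification that the induced ring maps genuinely preserve the primitive subspace $W$ and introduce no decomposable correction terms. This is exactly where compactness and semisimplicity enter, through Hopf's structure theorem and the Hopf-algebra structure on $H^*(G;\QQ)$ whose primitives are precisely the degree-wise generators $x_j$. One should also confirm that passing to $\QQ$ coefficients kills $H^1$ and all torsion, so that no subtler invariants survive to enlarge the kernel, in contrast to the $SU(2)$ surface-group setting of \cite{CLM}.
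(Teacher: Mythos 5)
Your proof is correct, and it shares the paper's overall skeleton: compute the action of a Nielsen generating set on the multiplicative generators of the (co)homology ring of $G^r$, observe that these generators form $\rkg$ copies of $\QQ^r$ on each of which the action is precisely the abelianization representation $\Gamma \to \GL_r(\ZZ)$, and conclude that the kernel is $\IA$ because $\GL_r(\ZZ)$ acts faithfully on a single copy. The genuine difference is where the key computation lives. The paper works in homology with the Pontryagin product; its key lemma (Proposition \ref{diaginc}) states $\Delta_{i,k}(t^j) = t^j_i + t^j_k$ and is proved by an intersection-theoretic argument inside a product of spheres, using the swap map and a projection to pin down signs. You work in cohomology with the cup product, and the corresponding input is the primitivity of the exterior generators granted by Hopf's theorem ($\mu^* x_j = x_j \otimes 1 + 1 \otimes x_j$), plus the antipode formula $x \mapsto -x$ on primitives for the inversion generator. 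These are dual formulations of essentially the same fact, but yours comes for free from the Hopf-algebra formalism, avoids the sign analysis entirely, and cleanly handles a variance issue the paper never remarks on: precomposition is contravariant, so pullback on cohomology is an honest homomorphism, whereas pushforward on homology is a priori an anti-homomorphism (harmless for kernels, but worth noticing). One point in your write-up deserves emphasis: since the self-maps $\rho \mapsto \rho \circ \phi$ of $G^r$ are \emph{not} group homomorphisms, there is no abstract reason their pullbacks should preserve your primitive subspace $W$; this is exactly what your generator-by-generator factorizations through $\mu \circ (p_1,p_2)$ establish, so the step you flag as the only nonroutine point is indeed covered, but it is essential that it is checked on a generating set rather than deduced from functoriality. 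Finally, note what the paper's homological formulation buys and yours does not attempt: the explicit $\Gamma$-equivariant isomorphism $\mathcal{E}_S$ onto $\bigotimes^{\rkg} \Lambda(A)$, which identifies the image of $\mathcal{H}(G)$ as a $\GL_r(\ZZ)$-module (Theorem \ref{imagecalculation}); for the kernel statement alone, your route suffices and is arguably more economical.
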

\noindent One immediate consequence of this theorem, is that the representation $\mathcal{H}(G)$ can be thought of as a $\textrm{Out}(F_r)$ representation. Using the fact that $\Gamma / \textrm{IA}_r \cong \textrm{GL}_r(\mathbb{Z})$, we are able to get a precise description of these representations as $\textrm{GL}_r(\mathbb{Z})$-modules.  One main feature of this description is that the isomorphism class of  $\mathcal{H}(G)$ depends only on the rank of the Lie algebra of $G$. 

\begin{theorem}\label{imagecalculation} Let $G$ be a compact semisimple Lie group, and $\mathfrak{g} $ be its Lie algebra. Let $A = \frac{F_r}{[F_r,F_r]} \otimes \mathbb{Q}$, then, as a $\textrm{GL}_r(\mathbb{Z})$ module:
 
$$\mathcal{H}(G) \cong \bigotimes_{i = 1}^{\textup{rank}(\mathfrak{g})} \Lambda(A) $$
\end{theorem}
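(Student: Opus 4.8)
The plan is to descend the representation to $\GL_r(\ZZ)$ using Theorem~\ref{kernelcalculation}, and then to exploit the Hopf-algebra structure of $H_*(G;\QQ)$ to identify the module completely from its action on primitive classes. Since $\ker(\mathcal H(G)) = \IA$ and $\aut/\IA \cong \GL_r(\ZZ)$, the representation factors through $\GL_r(\ZZ)$, so it is enough to describe $H_*(G^r;\QQ)$ as a $\GL_r(\ZZ)$-module. By Hopf's theorem, for a compact connected Lie group the rational homology $H_*(G;\QQ)$ is the exterior Hopf algebra $\Lambda(P)$ on its graded space of primitive elements $P$, which lies in odd degrees and satisfies $\dim_\QQ P = \rank(\mathfrak g)=:\ell$. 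The K\"unneth theorem then gives isomorphisms of graded vector spaces $H_*(G^r;\QQ)\cong H_*(G;\QQ)^{\otimes r}\cong\Lambda(P)^{\otimes r}\cong\Lambda(P\otimes A)$, where the factor $A\cong\QQ^r$ records which of the $r$ copies of $G$ a primitive comes from; this $A$ is canonically the abelianization $A=(F_r/[F_r,F_r])\otimes\QQ$. As a graded vector space $\Lambda(P\otimes A)$ is already $\bigotimes_{i=1}^{\ell}\Lambda(A)$, so the whole content of the theorem is the identification of the $\GL_r(\ZZ)$-action.

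The central step is to show that every $\alpha\in\aut$ induces a \emph{Hopf-algebra} automorphism $\alpha_*$ of $H_*(G^r;\QQ)$. The coalgebra compatibility is automatic: the diagonal is natural, so any self-map of $G^r$ induces a coalgebra map on homology, and in particular $\alpha_*$ preserves the primitive subspace $P\otimes A$. The algebra compatibility is where rationality is essential. Writing $\alpha$ as a product of Nielsen generators, it suffices to treat a single transvection, whose induced self-map of $G^r$ has the form $(g_1,g_2,\dots,g_r)\mapsto(g_1g_2,g_2,\dots,g_r)$, together with inversions $g_j\mapsto g_j^{-1}$ and permutations of the factors. Permutations are H-maps, hence induce algebra maps. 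The transvection map factors as $(\mathrm{id}\times\Delta)$ followed by $(m\times\mathrm{id})$, i.e.\ duplicate the second coordinate and then multiply; both the comultiplication $\Delta_*$ and the Pontryagin product $m_*$ are algebra maps because $H_*(G;\QQ)$ is graded-commutative (equivalently, because $G$ is rationally homotopy commutative, all rational Samelson products vanishing for degree reasons), and likewise inversion induces the antipode, which is an algebra map in a graded-commutative Hopf algebra. Hence $\alpha_*$ is a composite of algebra maps, so it is an algebra automorphism; being also a coalgebra map, it is a Hopf-algebra automorphism, and is therefore \emph{determined by its restriction to the primitives} $P\otimes A$, since $P\otimes A$ generates $\Lambda(P\otimes A)$ as an algebra.

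Next I would compute this restriction. Using primitivity, $\Delta_* p = p\otimes 1 + 1\otimes p$, the transvection above fixes the first-copy primitive $p^{(1)}$ and sends the second-copy primitive $p^{(2)}\mapsto p^{(1)}+p^{(2)}$, fixing all others; inversion acts by $-1$ on each primitive; and permutations permute the copies. All of these operations are independent of the primitive type and act only on the copy index, so on $P\otimes A=\bigoplus_{i=1}^{\ell}A$ the group $\GL_r(\ZZ)$ acts trivially on the $P$-factor and through the second factor by the standard representation on $A$ (matching, under the conventions in force, the action on $A=(F_r/[F_r,F_r])\otimes\QQ$). Since this agrees on a generating set of $\GL_r(\ZZ)$ with the standard action, it agrees on all of $\GL_r(\ZZ)$.

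Finally, because $\alpha_*$ is the Hopf automorphism induced functorially from its action on $P\otimes A$, and the exterior algebra is a functor satisfying $\Lambda(U\oplus U')\cong\Lambda(U)\otimes\Lambda(U')$ naturally, the $\GL_r(\ZZ)$-module $\Lambda(P\otimes A)=\Lambda\bigl(\bigoplus_{i=1}^{\ell}A\bigr)$ is isomorphic to $\bigotimes_{i=1}^{\ell}\Lambda(A)$, which is the claim. I expect the main obstacle to be the algebra-map assertion of the second paragraph: the transvection self-map of $G^r$ is genuinely \emph{not} an H-map when $G$ is nonabelian, so one cannot argue naively, and the argument really depends on the rational graded-commutativity of $H_*(G;\QQ)$; establishing this cleanly (rather than the false claim that the maps preserve the Pontryagin product on the nose) is the crux.
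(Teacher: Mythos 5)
Your proof is correct, and it reaches the theorem by a genuinely more structural route than the paper's. The paper proceeds by hands-on computation: it first shows the sphere generators $t^j$ are primitive, i.e.\ $\Delta_{i,k}(t^j)=t^j_i+t^j_k$ (Proposition \ref{diaginc}), via an intersection argument in a product of spheres; it then factors each Nielsen self-map of $G^r$ as a genuine group homomorphism followed by the multiplication map $\mu$, and computes the induced action on the generators $t^k_l$ one by one (Proposition \ref{Nielsenhom}); finally it writes down the explicit linear map $\mE_S\colon\bigotimes_{i=1}^{\rkg}\Lambda(A)\to H_*(G^r)$, checks equivariance on a generating set of $\Gamma$, and concludes that $\mE_S$ is an isomorphism by a dimension count (Proposition \ref{equivmap} and Lemma \ref{commdiag}). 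You replace the intersection argument by Hopf's structure theorem, replace the generator-by-generator bookkeeping by the principle that an algebra-and-coalgebra endomorphism of a primitively generated exterior Hopf algebra is determined by its restriction to the primitives, and replace the dimension count by functoriality of $\Lambda$ together with $\Lambda(U\oplus U')\cong\Lambda(U)\otimes\Lambda(U')$. What your version buys is precision at the one delicate point: the paper's claim that $N^R_{i,j}$ induces a ring homomorphism ultimately rests on the terse remark that $\mu$ ``removes the outside index and sends multiplication to multiplication,'' and this is exactly your observation that $m_*$ is an algebra map because $H_*(G;\QQ)$ is graded-commutative; you are right that the Nielsen self-map is not an H-map, so this is precisely where rationality enters, and your argument isolates it cleanly. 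What the paper's version buys is self-containedness and concreteness: it needs nothing beyond the Pontryagin-ring description it has already established, and the monomial basis makes the isomorphism explicit.

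Two caveats, neither fatal. First, your opening appeal to Theorem \ref{kernelcalculation} is circular relative to the paper's logic: there, the kernel theorem is \emph{deduced} from this image computation (via Lemma \ref{commdiag}). But you do not actually need it --- once you know each $\alpha_*$ is a Hopf automorphism determined by its action on $P\otimes A$, and that this action depends only on the image of $\alpha$ in $\GL_r(\ZZ)$, the factorization through $\GL_r(\ZZ)$ is proved rather than assumed, so you should simply delete that sentence. Second, your phrase ``matching, under the conventions in force'': the transvection acts on primitives by $p^{(2)}\mapsto p^{(1)}+p^{(2)}$ while on the abelianization it sends $e_1\mapsto e_1+e_2$; these matrices are transposes of one another, so identifying the copy-index action with the standard module $A$ (rather than its dual) requires left/right-action bookkeeping that you gloss over --- but the paper elides exactly the same point in Proposition \ref{equivmap}, so this affects both arguments equally.
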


\noindent It is possible to generalize the construction of the representations $\mathcal{H}(G)$ in the following way: given a finite index subgroup $K < F_r$, we have a finite index subgroup $\Gamma_K < \Gamma_r$ such that $\Gamma_K (K) = K$, an inclusion $\Gamma_K \to \textrm{Aut}(K) $, and a representation:
$$\rho_K: \Gamma_K \to GL(H_{*}(G^{\textrm{rank}(K)};\mathbb{Q}))$$

\noindent We induce this representation to $\Gamma$ and define: 

$$\mathcal{H}_K(G) = \textrm{Ind}_{\Gamma_K}^{\Gamma}(\rho_K)$$

\noindent We prove a similar result, namely that:

\begin{theorem} \label{generalrepresentation}
Let G, G' be compact semisimple Lie groups, and let $K < F$ be a finite index subgroup. Then:

$$\ker(\mathcal{H}_K(G)) = \ker (\mathcal{H}_K(G')) $$

\noindent Furthermore, if $\textrm{rank}(\mathfrak{g}) = \textrm{rank}(\mathfrak{g}')$ then $$\mathcal{H}_K(G) \cong \mathcal{H}_K(G') $$

\end{theorem}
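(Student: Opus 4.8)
The plan is to reduce both assertions to the corresponding statements for the free group $K$ itself, exploiting the fact that a finite-index subgroup of $F_r$ is free of finite rank, so Theorems \ref{kernelcalculation} and \ref{imagecalculation} apply to $K$ verbatim. First I would unwind the construction and identify the representation $\rho_K$ as the pullback of the representation $\mathcal{H}(G)$ for the free group $K$ along the inclusion $\Gamma_K \to \textrm{Aut}(K)$. That is, $\rho_K$ is the composite $\Gamma_K \to \textrm{Aut}(K) \to GL(H_*(G^{\textrm{rank}(K)};\mathbb{Q}))$ in which the second arrow is precisely the $\mathcal{H}(G)$-representation of $\textrm{Aut}(K)$. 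Once this identification is made, everything downstream is formal.

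For the kernel statement I would first record the standard description of the kernel of an induced representation. Since $K$ has finite index in $F_r$, the stabilizer $\Gamma_K$ has finite index in $\Gamma$, so $\mathcal{H}_K(G)$ is a finite-dimensional representation obtained by finite-index induction, and its kernel is the normal core of $\ker(\rho_K)$ in $\Gamma$, namely $\bigcap_{\gamma \in \Gamma} \gamma\, \ker(\rho_K)\, \gamma^{-1}$. By Theorem \ref{kernelcalculation} applied to $K$, the kernel of $\textrm{Aut}(K) \to GL(H_*(G^{\textrm{rank}(K)};\mathbb{Q}))$ is the Torelli subgroup $\textrm{IA}(K)$ of $\textrm{Aut}(K)$, which manifestly does not depend on $G$. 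Hence $\ker(\rho_K) = \Gamma_K \cap \textrm{IA}(K)$ is independent of the choice of compact semisimple $G$, and therefore so is its normal core. This yields $\ker(\mathcal{H}_K(G)) = \ker(\mathcal{H}_K(G'))$.

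For the isomorphism statement I would invoke Theorem \ref{imagecalculation} for $K$: as a $GL_{\textrm{rank}(K)}(\mathbb{Z})$-module, $\mathcal{H}(G) \cong \bigotimes_{i=1}^{\textrm{rank}(\mathfrak{g})} \Lambda(A_K)$, where $A_K = (K/[K,K])\otimes\mathbb{Q}$. When $\textrm{rank}(\mathfrak{g}) = \textrm{rank}(\mathfrak{g}')$ the two right-hand sides coincide, so the two representations of $\textrm{Aut}(K)$ are isomorphic: both factor through $GL_{\textrm{rank}(K)}(\mathbb{Z})$ by Theorem \ref{kernelcalculation}, so a $GL$-module isomorphism pulls back to an $\textrm{Aut}(K)$-module isomorphism. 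Restricting along $\Gamma_K \to \textrm{Aut}(K)$ gives an isomorphism $\rho_K(G) \cong \rho_K(G')$ of $\Gamma_K$-representations, and since induction is a functor preserving isomorphisms, $\mathcal{H}_K(G) = \textrm{Ind}_{\Gamma_K}^{\Gamma}\rho_K(G) \cong \textrm{Ind}_{\Gamma_K}^{\Gamma}\rho_K(G') = \mathcal{H}_K(G')$.

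The argument is thus almost entirely formal given Theorems \ref{kernelcalculation} and \ref{imagecalculation}; the only points requiring genuine care are the identification of $\rho_K$ with the pullback of the $\mathcal{H}$-construction for $K$, and the verification that the $GL_{\textrm{rank}(K)}(\mathbb{Z})$-module isomorphism of Theorem \ref{imagecalculation} really does pull back to a $\Gamma_K$-isomorphism. I expect the main conceptual obstacle to lie not in this theorem but upstream: the entire reduction rests on the fact that Theorems \ref{kernelcalculation} and \ref{imagecalculation} hold for every finitely generated free group, and in particular on the $G$-independence of the Torelli kernel together with the rank-only dependence of the image.
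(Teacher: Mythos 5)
Your proof is correct, and its overall strategy is the same as the paper's: identify $\rho_K$ as the pullback of the $\mathcal{H}(G)$-construction for the free group $K$ along $\Gamma_K \hookrightarrow \textrm{Aut}(K)$, apply Theorems \ref{kernelcalculation} and \ref{imagecalculation} to $K$, and then induce up to $\Gamma$. The isomorphism half of your argument is essentially identical to the paper's, which disposes of it in one line (``we are inducing isomorphic representations from the same groups''); your added care about pulling the $GL_{\textrm{rank}(K)}(\mathbb{Z})$-module isomorphism back to a $\Gamma_K$-isomorphism is exactly the content of the paper's commutative diagram for $\Gamma_K$.

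The kernel half is where you genuinely diverge from the paper, and your version is the correct one. The paper claims the stronger equality $\ker(\mathcal{H}_K(G)) = \Gamma_K \cap \textup{IA}(K)$, by arguing that $\ker(\mathcal{H}_K(G)) \subset \Gamma_K$ and that the restriction of the induced representation to $\Gamma_K$ has kernel $\ker(\rho_K)$. That last step is false: by Mackey, the restriction of $\textrm{Ind}_{\Gamma_K}^{\Gamma}(\rho_K)$ to $\Gamma_K$ contains not only $\rho_K$ but also its conjugates, and the kernel of the induced representation is, as you say, the normal core $\bigcap_{\gamma\in\Gamma}\gamma\bigl(\Gamma_K\cap \textup{IA}(K)\bigr)\gamma^{-1}$. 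The core equals $\Gamma_K\cap\textup{IA}(K)$ only if that subgroup is normal in $\Gamma$, which fails in general. For instance, take $r=2$ and $K=\ker(F_2\to\mathbb{Z}/2)$ with $a\mapsto 1$, $b\mapsto 0$. Any element of $\Gamma_K\cap\textup{IA}(K)$ acts trivially on $F_2^{ab}\otimes\mathbb{Q}$ (the map $K^{ab}\otimes\mathbb{Q}\to F_2^{ab}\otimes\mathbb{Q}$ is surjective and equivariant), so by Nielsen's theorem $\textup{IA}_2=\textrm{Inn}(F_2)$ one computes $\Gamma_K\cap\textup{IA}(K)=\{\operatorname{conj}_w: w\in K\}$; conjugating $\operatorname{conj}_b$ by the swap $a\leftrightarrow b$ gives $\operatorname{conj}_a$, which lies in $\Gamma_K$ but acts on $K^{ab}$ by exchanging the classes of $b$ and $aba^{-1}$, hence is not in $\Gamma_K\cap\textup{IA}(K)$. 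Since kernels of representations of $\Gamma$ are normal, the paper's equality cannot hold for this $K$. The theorem itself survives, because it only asserts $G$-independence of the kernel, and that is exactly what your argument delivers: $\ker(\rho_K)=\Gamma_K\cap\textup{IA}(K)$ is $G$-independent by Theorem \ref{kernelcalculation} applied to $K$, hence so is its normal core. In short, your proof establishes the theorem and simultaneously repairs the paper's own argument.
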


The image of these homomorphism were studied by Lubotzky and Grunewald in \cite{GL}, in the case where $G = SO(1)$. One of feature of this generalized construction is that the extended family of representations is asymptotically faithful, that is:  for every $\gamma \in \Gamma$ and for every $G$, $\exists K < F_r$ of finite index such that $\mathcal{H}_K(G)(\gamma) \neq Id$. This follows directly from the residual finiteness of $F_r$. Note that the generalized representations $\mathcal{H}_K(G)$ are no longer $\textrm{Out}(F_r)$ representations.

\section{The rational homology of Lie groups}
\subsection{Pontryagin and intersection structures}
\noindent
Given a compact  Lie group G, consider the continuous map $\mu: G \times G \to G$ given by $\mu(a,b) = ab$. This induces a linear map: $\mu_{*}: H(G; \mathbb{Q}) \otimes H(G; \mathbb{Q}) \to  H(G; \mathbb{Q})$. The map $\mu_{*}$ gives $H(G; \mathbb{Q})$ the structure of a ring, called the Pontryagin ring. We denote Pontryagin ring multiplication using the $\cdot$ symbol.

\noindent
The structure of $(H(G ; \mathbb{Q}), \cdot)$ is classic and has been calculated by Pontryagin in \cite{Struct}. For the sake of simplicity, we state it first for simple non-exceptional Lie groups. The root system of such Lie groups belong to the one of the infinite families: $A_{n}, B_{n}, C_{n}, D_{n}$, where $n$ is the rank of the Lie algebra of $G$ .  This root system entirely determines the above structure.  To each such root system, assign a finite sequence in the following way:

$$A_{n}: 3, 5, \ldots, 2n+1 $$
$$B_{n}:  3, 7, \dots, 4n-1$$
$$C_{n}:  3, 7, \ldots, 4n-1$$
$$D_{n}:  3, 7, \ldots , 4n-5, 2n - 1$$

\noindent
Note that in the case of $D_{n}$, the sequence is not arithmetic. Let $i_{1}, \ldots, i_{n}$ be the sequence assigned to $G$. Then:  

\begin{equation} \label{eq1}
H_*(G ; \mathbb{Q}) \cong H_*(S^{i_{1}} \times \ldots \times S^{i_{n}})
\end{equation}

\noindent where $S^{j}$ is the $j$-dimensional sphere. Given a basepoint \[ p_1 \times \ldots \times p_n \in  S^{i_{1}} \times \ldots \times S^{i_{n}}\] and an index $1 \leq l \leq n$, there is an  inclusion given by
\begin{equation}\label{eq_defn_I} 
\begin{array}{rl}
I_{l}:  S^{i_l} & \hookrightarrow S^{i_{1}} \times \ldots \times S^{i_{n}} \\
x & \to  (p_1, \ldots, p_{l-1}, x, p_{l+1}, \ldots, p_n)
\end{array}
\end{equation} 
The isomorphism in equation (\ref{eq1}) can be made explicit, by finding a $i_{l}$-dimensional submanifold of $G$ for every $l$ , which we call $T^l$ such that $s^i = [I_{l,*}(S^{i_l})]$ corresponds to $[T^l]$, where $[\cdot]$ denotes the image in homology.   

\noindent The Pontryagin structure is also known: it is a graded Grassman algebra with a unit and one generator of dimension $i_{k}$ for each $S^{i_k}$, which we denote $t^k$, where $t^k$ is the homology class of $T^k$. The set of generators will be denoted $\mathcal{P}(G)$. In the sequel, we will write $i_k = \textrm{dim}(t^k)$. Under the isomorphism in equation (\ref{eq1}), the Pontryagin structure corresponds to the direct product structure.

\begin{example} Let $G = SO_{3}(\mathbb{R})$. Then $H_{*}(G) \cong H_*(S^{3} \times S^{5})$. The Poincare polynomial of  $H_*(G)$ is $1 + x^{3} + x^{5} + x^{8}$. As generators for the Pontryagin structure, we have $t^{i} \in H_{i}(G)$ for $i = 1,2$, with dimensions $3,5$, and unit, which we call 1. The space $H_{8}(G)$ generated by $t^{1} \cdot t^{2} = -t^{2} \cdot t^{1}$.
\end{example} 

\noindent The situation for general semisimple groups $G$ (including the simple exceptional Lie groups) is similar. As in the simple case, $H_*(G)$ is isomorphic to the product of $\textup{rank}(\mathfrak{g})$ spheres, where $\mathfrak{g}$ is the Lie algebra of $G$. Furthermore, the Pontryagin structure is once again a Grassman algebra defined in the same way. 

\noindent In this article we are interested in the homology of two different groups: $G$ and $G^r$. We find it convenient to modify the notation above when considering the homology of $G^r$, despite the fact that it too is a semisimple Lie group. For $1 \leq i \leq r$ we have natural inclusions $\Delta_i: G \to G^{r}$ which induce maps $(\Delta_{i})_* : H_*(G) \to H_*(G^{r})$. Given $t^{j} \in H_*(G)$ we denote $(\Delta_{i})_*(t^{j}) = t^{j}_i$. The elements $t^{j}_{i}$ are the same generators of $H_*(G^r)$ discussed above, but are indexed differently.  A product of the $t^j_i$'s  is called a \emph{monomial}. Notice that the set of monomials is finite. 

\begin{example}Suppose $G = SU(2)^{n}$. The Poincare polynomial of $SU(2)$ is $1 + x^{3}$. The Poincare polynomial of $G$ is $\Sigma_{j = 1}^{n}  \left( \begin{array}{c}  n \\ j \end{array} \right) x^{3j}$. As a ring, it is generated by the elements $ t^{1}_{1} , \ldots, t^{1}_{n}$, all of which have dimension $3$.
\end{example}

\subsection{Diagonal inclusions} Give $1 \leq i  \leq k \leq r$,  the the diagonal inclusion $\Delta_{i,j}: G \to G^{r} $  maps $g \in G$ to the element with $g$ in the $i$ and $k$ components, and $Id$ in all other components. Notice that we allow the case $i =k$. \\
\medskip
\noindent
 The map assigning a group to its Pontryagin ring is functorial, and  the homomorphisms $\Delta_{i,k}$  induce ring homomorphisms $\Delta_{i,k}: (H(G), \cdot) \to H(G^{r}, \cdot)$. Thus, to give an explicit description of these maps, it is enough to calculate them for elements $t = t^j \in \mathcal{P}(G)$.   We first notice that if $i = k$, then $\Delta_{i,i}(t^j)  = t^j_i \in \mathcal{P}(G^{r})$. In what follows, assume $i \neq k$.

 \begin{proposition} \label{diaginc}
In the notation above, $\Delta_{i,k}(t^j) = t^j_i + t^j_k$ 
 \end{proposition}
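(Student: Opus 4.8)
The plan is to factor the diagonal inclusion through $G\times G$ and thereby reduce the claim to the behaviour of $t^j$ under the coproduct of $H_*(G;\mathbb{Q})$. Write $\delta\colon G\to G\times G$ for the diagonal $g\mapsto (g,g)$, and let $\iota_{i,k}\colon G\times G\to G^r$ be the map sending $(a,b)$ to the element having $a$ in coordinate $i$, $b$ in coordinate $k$, and $\mathrm{Id}$ in every other coordinate. Because $i\neq k$, the map $\iota_{i,k}$ is a group homomorphism, and by construction $\Delta_{i,k}=\iota_{i,k}\circ\delta$. First I would analyse each of the two factors on homology separately.

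The key structural observation about $\iota_{i,k}$ is that, on homology, it is nothing but Pontryagin multiplication of the two single-coordinate pushforwards. Indeed $\iota_{i,k}(a,b)=\Delta_i(a)\,\Delta_k(b)$, the product being taken in $G^r$; equivalently $\iota_{i,k}=\mu^{(r)}\circ(\Delta_i\times\Delta_k)$, where $\mu^{(r)}$ is multiplication in $G^r$. Passing to homology and using the K\"unneth isomorphism over $\mathbb{Q}$, one gets $(\iota_{i,k})_*(\alpha\otimes\beta)=(\Delta_i)_*(\alpha)\cdot(\Delta_k)_*(\beta)$, where $\cdot$ is the Pontryagin product of $G^r$. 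In particular, since the unit maps to the unit and $(\Delta_i)_*(t^j)=t^j_i$ by definition, we obtain $(\iota_{i,k})_*(t^j\otimes 1)=t^j_i$ and $(\iota_{i,k})_*(1\otimes t^j)=t^j_k$.

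The heart of the matter is to compute $\delta_*(t^j)$, i.e. the image of the generator under the coproduct $H_*(G;\mathbb{Q})\to H_*(G;\mathbb{Q})\otimes H_*(G;\mathbb{Q})$; I claim each generator is \emph{primitive}, so that $\delta_*(t^j)=t^j\otimes 1+1\otimes t^j$. This is where the spherical nature of the generators enters. Recall from the discussion after equation (\ref{eq1}) that $t^j$ is the class carried by the $i_j$-dimensional sphere $T^j\subset G$, that is, it is the image of the fundamental class of $S^{i_j}$ under a map $S^{i_j}\to G$ (compare equation (\ref{eq_defn_I})). For the fundamental class $[S^m]$ of a sphere the reduced coproduct vanishes purely for degree reasons: $\tilde H_*(S^m;\mathbb{Q})$ is concentrated in degree $m$, so $\tilde H_*(S^m)\otimes\tilde H_*(S^m)$ lives in degree $2m$ and admits no contribution in degree $m$; hence $[S^m]$ is primitive. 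Naturality of the diagonal under the map $S^{i_j}\to G$ then transports this to $\delta_*(t^j)=t^j\otimes 1+1\otimes t^j$. (Primitivity of the generators is in any case the classical structure of the homology Hopf algebra of a compact Lie group.)

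Combining the two steps gives
\begin{equation*}
(\Delta_{i,k})_*(t^j)=(\iota_{i,k})_*\bigl(\delta_*(t^j)\bigr)=(\iota_{i,k})_*\bigl(t^j\otimes 1+1\otimes t^j\bigr)=t^j_i+t^j_k,
\end{equation*}
which is the assertion. Everything except the primitivity in the previous paragraph is routine functoriality and the K\"unneth formula, so I expect the main obstacle to be exactly that step --- more precisely, confirming that the generator $t^j$ really is represented by a sphere mapping into $G$, so that the degree argument and the naturality of the diagonal both apply. This is the content of the explicit form of the isomorphism (\ref{eq1}) recorded in the paper, and once it is pinned down the rest of the computation is forced.
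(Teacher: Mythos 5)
Your proof is correct, and it shares the paper's overall skeleton --- both factor $\Delta_{i,k}$ as a diagonal followed by the map that places the two coordinates in slots $i$ and $k$ and multiplies (your $\iota_{i,k}=\mu^{(r)}\circ(\Delta_i\times\Delta_k)$ is exactly the paper's $\Delta_{i,i}\cdot\Delta_{k,k}$) --- but the key step, evaluating the diagonal on the generator, is handled by a genuinely different argument. The paper works inside the product-of-spheres model: it notes that $H_D(S^D\times S^D)$ is spanned by $a=\Delta_{1,1}([S^D])$ and $b=\Delta_{2,2}([S^D])$, uses intersection numbers to restrict $(\Delta_{1,2})_*([S^D])$ to $\pm a\pm b$, the swap symmetry $s\circ\Delta_{1,2}=\Delta_{1,2}$ to force equal coefficients, and the projection $p\circ\Delta_{1,2}=\mathrm{Id}$ to rule out $-a-b$. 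You instead prove primitivity of $t^j$ Hopf-algebraically: the counit property puts the coproduct in the form $t^j\otimes 1+1\otimes t^j$ plus terms in $\tilde H_*\otimes\tilde H_*$, those terms vanish for a sphere by degree reasons, and naturality of the diagonal along a map $S^{i_j}\to G$ carrying $[S^{i_j}]$ to $t^j$ transports the conclusion to $G$. Your route is cleaner (no transversality or sign bookkeeping) and has the virtue of isolating the hypothesis both proofs secretly rest on: that $t^j$ is a spherical class, i.e.\ that the isomorphism (\ref{eq1}) is geometric rather than merely additive --- the paper assumes this wholesale when it ``works in the setting of a product of spheres,'' whereas you flag it explicitly and correctly note that it is covered by the classical structure theory (Leray/Milnor--Moore), by which the generators of the homology Hopf algebra of a compact Lie group may be chosen primitive. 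The one caveat worth recording is that your ``purely degree reasons'' step tacitly invokes the counit identity $(p_1)_*\circ\delta_*=\mathrm{id}$, which is the same projection trick the paper uses, so the two arguments are closer than they first appear; nothing is missing, but that dependence deserves a sentence.
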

\begin{proof} 
We  find it convenient to work in the setting of a product of spheres.  
Let $D = \textrm{dim}(t^j)$. The map $\Delta_{i,k}$ factors in the following way:

$$H_*(S^D) \xrightarrow{(\Delta_{1,2})_*} H_*(S^D \times S^D) \xrightarrow{I_j \times I_j} H_*(G \times G) \xrightarrow{\Delta_{i,i} \cdot \Delta_{k,k}} H_*(G \times \dots \times G)  $$

\noindent where the maps $I_i, I_k$ are defined in equation (\ref{eq_defn_I}). The vector space $H_D(S^D \times S^D)$ is two dimensional, and is generated by $a = \Delta_{1,1}([S^D])$ and $b = \Delta_{2,2}([S^D])$. The manifolds $\Delta_{1,1}(S^D)$ and $\Delta_{2,2}(S^D)$ each intersect $\Delta_{1,2}(S^D)$ once, so we have four possibilities  $(\Delta_{1,2})_*([S^D]) = \pm a \pm b$. Let $s: S^D \times S^D \to S^D \times S^D$ be given by $s(x,y) = (y,x)$. We have that $\Delta_{1,2} = s \circ \Delta_{1,2}$, so we are left with two options: $(\Delta_{1,2})_*([S^D]) = a + b$ or $-a - b$. Let $p: S^D \times S^D \to S^D$ be the projection to the first factor, given by $p(x,y) = x$. We have that $p \circ \Delta_{1,2} = Id$, which rules out $-a-b$. Thus $(\Delta_{1,2})_*([S^D]) = a + b$. Let $\phi = (\Delta_{i,i}\cdot \Delta_{k,k}) \circ (I_{j} \times I_j)$,  by definitions, it follows that  $\phi(a) = t^j_{i}$, $\phi(b) = t^j_k$, so $\phi(a+b) = t^j_i + t^j_k$, as claimed.
\end{proof}

\section{Calculating the action of generators of $\Gamma$}
Choose a basis $\{a_{1}, \ldots, a_{r}\}$ for $F_{r}$. A (right or left) Nielsen transformation with respect to this basis is an automorphism of the form:
$$N^{R}_{i,j}(a_{k}) = a_{k} a_{i}^{\delta_{j}(k)}$$
or 
$$N^{L}_{i,j}(a_{k}) = a_{i}^{\delta_{j}(k)} a_{k} $$
where $\delta_{i}(k)$ is the Kroenecker delta function.  The group $\textrm{Aut}(F_{r})$ has a index $2$ subgroup that is generated by Nielsen transformations. 

\noindent For $1 \leq k \leq r$ define $p_{k}: G^r \to G$ to be the $k$--th coordinate projection. Let $\trunc_k: G^r \to G^r$ by the rule: 
$$\trunc_k(g_1,\ldots,g_r) = (g_1, \dots, 1, \dots g_r)$$
where the 1 is in the $k$--th coordinate.

\begin{proposition} \label{Nielsenhom}
The maps $N^R_{i,j}$ and $N^L_{i,j}$ induce ring homomorphisms of the Pontryagin ring of $G$. Furthermore, these homomorphisms are both equal, and: 
$$N^R_{i,j}(t^k_l) = N^L_{i,j}(t^k_l) = \left\{ \begin{array}{l l} t^k_l & l \neq i \\[0.4cm]  t^k_i + t^k_j & l = i \end{array} \right. $$
 
\end{proposition}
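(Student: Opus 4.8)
The plan is to realize each Nielsen transformation as an explicit self-map of $G^r = \Hom(F_r,G)$ and then to read off its action on homology by precomposing with the coordinate inclusions and applying Proposition \ref{diaginc}.

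First I would pin down the geometric map. Precomposition by $N^R_{i,j}$ carries a homomorphism $\rho$ with $\rho(a_k)=g_k$ to the one sending $a_k \mapsto g_k g_i^{\delta_j(k)}$; that is, it is the self-map $f^R:G^r\to G^r$ fixing every coordinate except the $j$-th, which it replaces by $g_j g_i$. Similarly $N^L_{i,j}$ gives the map $f^L$ replacing the $j$-th coordinate by $g_i g_j$. (Here one must fix the action convention $\rho\mapsto\rho\circ\phi$ rather than $\rho\mapsto\rho\circ\phi^{-1}$; the latter would insert the inversion map $g\mapsto g^{-1}$ in a coordinate, and since the odd-degree generators are sent to their negatives by the antipode this would flip the sign of $t^k_j$ below.) The goal is to compute $f^R_*$ and $f^L_*$ on $H_*(G^r;\QQ)$ and to see that they agree.

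The first real point is that $f^R_*$ is a homomorphism of the Pontryagin ring even though $f^R$ is not a group homomorphism of $G^r$: one checks directly that $f^R(xy)\neq f^R(x)f^R(y)$ in general. Comparing $f^R\circ\mu$ with $\mu\circ(f^R\times f^R)$, the two composites agree in every coordinate except the $j$-th, where they differ only by transposing two adjacent factors inside the fourfold multiplication $G^4\to G$ (the entries $y_j$ and $x_i$). Because $H_*(G;\QQ)$ is graded-commutative, the Koszul sign created by such a transposition is exactly cancelled by graded commutativity of the Pontryagin product, so the two maps $G^4\to G$ induce the same map on rational homology. Hence $f^R\circ\mu$ and $\mu\circ(f^R\times f^R)$ agree on homology and $f^R_*$ is multiplicative; the identical argument handles $f^L_*$. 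This graded-commutativity bookkeeping is the step I expect to demand the most care.

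With multiplicativity established it suffices to evaluate $f^R_*$ on the generators $t^k_l=(\Delta_l)_*(t^k)$, and Proposition \ref{diaginc} finishes the computation. I would split on the value of $l$. If $l\neq i$ then $f^R\circ\Delta_l=\Delta_l$ (when $l=j$ the trivial $i$-th coordinate leaves the modification invisible, and when $l\neq i,j$ the $j$-th coordinate stays trivial), so $f^R_*(t^k_l)=t^k_l$. If $l=i$ then $f^R\circ\Delta_i=\Delta_{i,j}$, the diagonal inclusion into the $i$-th and $j$-th factors, and Proposition \ref{diaginc} gives $f^R_*(t^k_i)=t^k_i+t^k_j$. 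The same computation yields $f^L\circ\Delta_i=\Delta_{i,j}$, so $f^L_*$ agrees with $f^R_*$ on all generators; being ring homomorphisms, they are equal. This simultaneously establishes the displayed formula and the asserted equality of the two induced maps.
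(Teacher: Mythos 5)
Your proof is correct, but it takes a genuinely different route from the paper's, and the difference sits exactly at the crux: why precomposition by a Nielsen move---which is \emph{not} a group homomorphism of $G^r$---induces a homomorphism of Pontryagin rings. You prove multiplicativity directly: $f^R\circ\mu$ and $\mu\circ(f^R\times f^R)$ have identical coordinate components except in the $j$-th coordinate, where they differ by an adjacent transposition inside a fourfold product, and the Koszul sign of that transposition cancels against the graded commutativity of $H_*(G;\QQ)$. The paper instead factors $N^R_{i,j}=\mu\circ\Psi^R$ with $\Psi^R=(\Id,\Delta_{i,j}\circ p_i)\circ(\trunc_i,\Id)\circ\Delta_{1,2}$ an honest group homomorphism $G^r\to G^r\times G^r$, so that $\Psi^R_*$ is a ring map by functoriality, and the remaining burden is carried by the claim that $\mu_*$ ``removes the outside index and sends multiplication to multiplication''---a claim that is true but itself rests on exactly the graded commutativity you invoke, so the two arguments ultimately draw on the same fact in different packaging. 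Your computation on generators is likewise leaner: instead of pushing $t^k_l$ through the four-stage factorization (where, incidentally, the paper's displayed formula for $(\trunc_i,\Id)_*$ has its indicator inverted---that map should kill $(t^k_i)_1$ and fix all other generators), you observe that $f^R\circ\Delta_l=\Delta_l$ for $l\neq i$ and $f^R\circ\Delta_i=\Delta_{i,j}$ and apply Proposition \ref{diaginc} once. What the paper's route buys is that every ring-theoretic verification is outsourced to functoriality for group homomorphisms; what yours buys is economy and transparency about where graded commutativity genuinely enters. The one step you should spell out is the passage from equality of the coordinate components on homology to equality of the full maps $G^r\times G^r\to G^r$ on homology: write each composite as the iterated diagonal followed by the product of its components and invoke the K\"unneth isomorphism, which is harmless here since the coefficients form a field.
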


\begin{proof} By definition $N^R_{i,j}$ and $N^L_{i,j}$ factor through the maps shown in the following commutative diagrams.


\[ \xymatrixcolsep{4pc} \xymatrix{ G^r \ar[r]^{\Delta_{1,2}} \ar@/_{1.5pc}/[rrrr]_{N^R_{i,j}} & G^r \times G^r  \ar[r]^{(\trunc_i,Id)} & G^r \times G^r  \ar[r]^{(Id, \Delta_{i,j} \circ p_i)} & G^r \times G^r  \ar[r]^{\mu} & G^r } \]

\[ \xymatrixcolsep{4pc} \xymatrix{ G^r \ar[r]^{\Delta_{1,2}} \ar@/_{1.5pc}/[rrrr]_{N^L_{i,j}} & G^r \times G^r  \ar[r]^{(Id,\trunc_i)} & G^r \times G^r  \ar[r]^{(\Delta_{i,j} \circ p_i, Id)} & G^r \times G^r  \ar[r]^{\mu} & G^r } \]

\noindent Where $\mu$ is the Pontryagin product. Now, let 
\[\Psi^R = (Id, \Delta_{i,j}\circ p_i)\circ (\trunc_i, Id)\circ \Delta_{1,2}\] and define $\Psi^L$ similarly. The maps $\Psi^R, \Psi^L$ are homomorphisms, and as such the induced maps on Homology are Pontryagin ring homomorphisms. Thus, to calculate the action of these homomorphisms, it suffices to calculate their action on generators. 

\noindent Notice that the inclusion of $G^r$ into $G^r \times G^r$ necessitates the addition of an index. We will use the notation $(t)_1$ or $(t)_2$ to denote the image of $t$ in $H_*(G^r \times G^r)$ under the maps $\Delta_{1,1}$, $\Delta_{2,2}$. By proposition \ref{diaginc}, we have that:

$$\Delta_{1,2}(t^k_l)  = (t^k_l)_1 + (t^k_l)_2$$

\noindent By definition, we have that $$(\trunc_i,Id)(t^k_l)_s = \delta_{l,i} \delta_{s,1} (t^k_l)_s$$
that is -  the map sends generators of the form $(t^k_i)_i$ to themselves, and all other generators to $0$. Once again, by proposition \ref{diaginc} we have that:

$$(Id, \Delta_{i,j} \circ p_i)(t^k_l)_s = \left \{ \begin{array} {l c} (t^k_l)_s & s= 1 \\[0.4cm] \delta_{l,i}\bigl[ (t^k_i)_2 + (t^k_j)_2 \bigr] & s = 2 \end{array} \right.  $$

\noindent Composing these maps, we get:
$$\Psi^R(t^k_l) = \left \{ \begin{array} {l c} (t^k_l)_1 & i \neq  l \\[0.4cm] (t^k_i)_2 + (t^k_j)_2  & i = l \end{array} \right. $$

\noindent Given $x,y \in H_*(G^r)$, the definition of Pontryagin multiplication gives that $$\mu\bigl((x)_1(y)_2 \bigl) = x\cdot y = \mu((x \cdot y)_1) $$ 

\noindent In other words, $\mu$ removes the outside index, and sends multiplication to multiplication. In particular, $\mu(t^k_l)_s = t^k_l$, so $N^R_{i,j}$ is as in the statement of the proposition. The same line of reasoning holds for $N^L_{i,j}$.
\end{proof}

Consider the permutation $\sigma = (1 2) \in S_r$. The permutation $\sigma$ acts on $F_r$ by exchanging the first and second generators. Nielsen proved \cite{NielsenPres} that the group $\Gamma$ is generated by $\sigma$ and by Nielsen transformations. The action of $\sigma$ on $H_*(G^r)$ is given by:

$$\sigma(t^k_l) =  t^k_{\sigma(l)}$$ 

\noindent This gives complete description of the action of a set of generators of $\Gamma$ on $H_*(G^r)$.

\begin{example} 
Let $G = SO_{3}(\mathbb{R})$, and $r = 3$. Generators of $H_*(G^3)$ are of the form $t^k_l$ where $1 \leq k \leq 2$, $1 \leq l \leq 3$. When $k = 1$, the dimension of the generator is 3, and when $k = 2$ the dimension is $5$. We have that:

$$ \begin{array}{ll}
N^R_{1,3}(t^1_1 t^1_3 t^2_1t^2_2) & = (t^1_1 + t^1_3)t^1_3(t^2_1 + t^2_3)t^2_2 \\[0.4 cm]
& = t^1_1 t^1_3 t^2_1t^2_2 + t^1_3 t^1_3 t^2_1 t^2_2 + t^1_1t^1_3t^2_3t^2_2 + t^1_3t^1_3t^2_3t^2_2 \\[0.4 cm]
& =  t^1_1 t^1_3 t^2_1t^2_2 + 0 - t^1_1t^1_3t^2_2t^2_3 + 0
\end{array}$$

\end{example}

\section{Describing the image and the kernel of $\mathcal{H}(\Gamma)$}

The group $\Gamma$ acts on the abelianization of $F_r$. Let $A = \frac{F_r}{[F_r,F_r] }\otimes \mathbb{Q}$ and let $\alpha: \Gamma \to GL_r(\mathbb{Q})$ be the homomorphism giving this action. \\

\noindent In this section we show that $\mathcal{H}$ factors through $\alpha$, and describe $\mathcal{H}$ as a $GL_r(\mathbb{Q})$-module.  Fix an index $k$, and consider the linear map $\mE_k: \Lambda(A) \to H_*(G^r)$ given on a basis by:

$$\mE_k (e_{j_1} \wedge \dots  \wedge e_{j_m}) = t^k_{j_1} \dots t^k_{j_m} $$
$$\mE_k(1_{\Lambda(A})) = 1_{H_*(G^r)} $$
\noindent Given a subset $S = \{i_1, \dots, i_l \} \subset \{1, \dots, \rkg \} $ , define the map $$\mathcal{E}_S: \bigotimes^{l} \Lambda(A) \to H_*(G^r)$$
by setting: 

$$\mathcal{E}_S(v_1 \otimes \dots \otimes v_l) = \prod_{j=1}^l \mathcal{E}_{i_j}(v_j)$$

\noindent The space $\bigotimes^l (\Lambda(A))$ is a $GL_r(\mathbb{Q})$-module and hence also a $\Gamma$-module. 

\begin{proposition} \label{equivmap} 
The maps $\mE_S$ defined above are $\Gamma$--equivariant. 
\end{proposition}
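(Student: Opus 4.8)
The plan is to reduce the $\Gamma$-equivariance of each $\mE_S$ to the equivariance of the single-index maps $\mE_k$, and then to verify the latter on the generating set of $\Gamma$ furnished by the Nielsen transformations $N^R_{i,j}$ and the swap $\sigma = (1\,2)$. Since $\Gamma$ acts on $A = (F_r/[F_r,F_r])\otimes\QQ$ through $\alpha$, it acts on $\Lambda(A)$ and hence diagonally on $\bigotimes^l\Lambda(A)$; I want to show $\mE_S(\gamma\cdot v) = \gamma\cdot\mE_S(v)$ for all $\gamma$ in this generating set, where the action on the right is the one on $H_*(G^r)$ computed in Section 3.

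First I would observe that, because the Pontryagin product is a ring homomorphism out of the tensor product and because the $\Gamma$-action on $H_*(G^r)$ is induced by a group homomorphism of $G^r$ (hence respects the Pontryagin product), it suffices to treat the case $|S|=1$, i.e.\ to show each $\mE_k\colon\Lambda(A)\to H_*(G^r)$ is $\Gamma$-equivariant. Indeed, $\mE_S(v_1\otimes\cdots\otimes v_l) = \prod_j \mE_{i_j}(v_j)$, and any generator $\gamma$ acts on a product of classes by acting on each factor (it is a ring homomorphism of the Pontryagin ring by Proposition \ref{Nielsenhom}), so once $\gamma\mE_{i_j}(v_j) = \mE_{i_j}(\gamma v_j)$ is known for each factor, equivariance of $\mE_S$ follows by multiplying.

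Next I would check equivariance of $\mE_k$ on generators. On a basis element $e_{j_1}\wedge\cdots\wedge e_{j_m}$ of $\Lambda(A)$, the map $\mE_k$ sends it to the monomial $t^k_{j_1}\cdots t^k_{j_m}$. For the swap $\sigma$ this is immediate: $\sigma$ permutes the $e_j$'s by the same permutation it applies to the $t^k_j$'s, and $\sigma(t^k_l) = t^k_{\sigma(l)}$. For a Nielsen transformation $N^R_{i,j}$, the action on $A$ sends $e_i\mapsto e_i + e_j$ (equivalently as dictated by $\alpha$) and fixes the other basis vectors, while Proposition \ref{Nielsenhom} gives exactly $N^R_{i,j}(t^k_l) = t^k_l$ for $l\neq i$ and $t^k_i + t^k_j$ for $l = i$. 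Since $N^R_{i,j}$ acts on $H_*(G^r)$ as a Pontryagin ring homomorphism, applying it to the monomial $t^k_{j_1}\cdots t^k_{j_m}$ distributes over the product and substitutes $t^k_i\mapsto t^k_i + t^k_j$ factor-by-factor; this is precisely the image under $\mE_k$ of $N^R_{i,j}(e_{j_1}\wedge\cdots\wedge e_{j_m})$, where the exterior algebra structure records the same substitution (the wedge is alternating, matching the graded-commutativity of the Pontryagin product, so repeated indices vanish on both sides consistently). The main point to verify carefully is this compatibility of signs: that the alternating relations in $\Lambda(A)$ correspond to the Grassmann relations among the $t^k_l$ under $\mE_k$, so that the substitution performed by $N^R_{i,j}$ on the two sides agrees term-by-term.

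The step I expect to be the main obstacle is precisely this sign-bookkeeping. Both $\Lambda(A)$ and the single-index subalgebra generated by $\{t^k_l\}_l$ are exterior algebras, but one must confirm that $\mE_k$ is an algebra map intertwining the alternating structures, so that expanding $N^R_{i,j}(t^k_i + t^k_j)$ inside a monomial reproduces, with correct signs, the expansion of $\alpha(N^R_{i,j})$ acting on the corresponding wedge. Once the single-index algebra isomorphism and its compatibility with the substitution rule of Proposition \ref{Nielsenhom} are pinned down, checking the two families of generators is a direct computation, and equivariance for all of $\Gamma$ follows since these generators suffice.
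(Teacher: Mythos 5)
Your proposal follows the same route as the paper's own proof: verify equivariance on the Nielsen generators and the swap $\sigma$, using the computation of Proposition \ref{Nielsenhom} together with compatibility of the action with products. Your added detail is mostly sound: the reduction to $|S|=1$ and the sign bookkeeping work because, for fixed $k$, the classes $t^k_l$ all have the same odd degree, hence generate an exterior algebra and $\mE_k$ is an algebra map. One incidental mis-justification: the parenthetical claim that the $\Gamma$-action on $H_*(G^r)$ ``is induced by a group homomorphism of $G^r$'' is false --- the precomposition map attached to $N^R_{i,j}$ is $(g_1,\dots,g_r)\mapsto(g_1,\dots,g_jg_i,\dots,g_r)$, which is not a homomorphism of $G^r$ unless $G$ is abelian. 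The ring-homomorphism property you need is exactly the content of Proposition \ref{Nielsenhom}, which you also cite, so this part is repairable.

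The genuine gap is an index transposition that your write-up silently corrects for, and which conceals a real subtlety. You claim $\alpha(N^R_{i,j})$ sends $e_i\mapsto e_i+e_j$ and fixes the other basis vectors. It does not: since $N^R_{i,j}(a_j)=a_ja_i$ and $N^R_{i,j}(a_l)=a_l$ for $l\neq j$, the abelianized action is $e_j\mapsto e_j+e_i$, $e_l\mapsto e_l$ ($l\neq j$) --- this is the formula recorded in the paper's own proof. On homology, however, Proposition \ref{Nielsenhom} gives $t^k_i\mapsto t^k_i+t^k_j$ and $t^k_l\mapsto t^k_l$ ($l\neq i$): as a matrix on $\mathrm{span}(t^k_1,\dots,t^k_r)$ this is the \emph{transpose} of $\alpha(N^R_{i,j})$, not $\alpha(N^R_{i,j})$ itself. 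So with the correct $\alpha$ your check fails on $e_j$: one has $\mE_k\bigl(\alpha(N^R_{i,j})e_j\bigr)=t^k_i+t^k_j$, whereas the homology action sends $\mE_k(e_j)=t^k_j$ to $t^k_j$. The underlying reason is that $H_*(G^r)$, with the action induced by precomposition, is naturally built on the \emph{dual} of the abelianization: what is true is that $\mE_k$ intertwines the homology action with the contragredient action $\gamma\mapsto\bigl(\alpha(\gamma)^{-1}\bigr)^{T}$, i.e.\ the source should be $\Lambda(A^*)$ rather than $\Lambda(A)$. Your proof closes only because of the mis-stated formula. In fairness, the paper's proof contains the mirror image of the same flaw (it records $\alpha$ correctly and then asserts the match with Proposition \ref{Nielsenhom}); since $\gamma\mapsto(\gamma^{-1})^{T}$ is an automorphism of $GL_r(\ZZ)$, this does not affect the kernel computation or the image as an abstract group, but the equivariance statement and the module description should be formulated with $A^*$ (equivalently, with the transposed action).
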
 

\begin{proof} It is enough to show equivariance for a set of generators of $\Gamma$.  The action of a Nielsen transformation $N^R_{i,j}$ on $A$ is given in the standard basis by 

$$N^R_{i,j}(e_l) = \left \{ \begin{array}{ll} e_i + e_j & l = j \\ e_l & l \neq j  \end{array}  \right.$$

The action of $N^L_{i,j}$ is the same. The map $\sigma$ permutes the first and second coordinates. The action of $GL_r(\mathbb{Q})$ commutes with the wedge and tensor product, and hence $\mE_S \circ \alpha$ is exactly the same as the action calculated in Proposition \ref{Nielsenhom} .

\end{proof}

\begin{lemma} \label{commdiag}The following diagram commutes:
\[  \xymatrix{ \Gamma \ar[r] \ar[d] & GL(H_*(G^r)) \ar[d] \\
GL_r(\ZZ) \ar[r] & GL(\bigotimes_{i=1}^{\rkg} \Lambda(A) ) } \]
\end{lemma}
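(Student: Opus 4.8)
The plan is to identify the four arrows of the square and then recognize that commutativity reduces to Proposition \ref{equivmap}, once I verify that the map $\mE_S$ with $S = \{1,\dots,\rkg\}$ is a linear isomorphism. The left arrow is $\alpha$, the top arrow is the representation $\mathcal{H}(G)$ on $H_*(G^r)$, and the bottom arrow is the representation $\rho$ of $GL_r(\ZZ)$ on $\bigotimes_{i=1}^{\rkg}\Lambda(A)$ induced by the standard action on $A$. The right arrow is to be the map $\phi \mapsto \mE_S^{-1}\circ \phi \circ \mE_S$, so the first order of business is to make this well-defined.

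First I would show that $\mE_S$ is a linear isomorphism when $S$ is the full index set $\{1,\dots,\rkg\}$. The standard monomial basis of $\Lambda(A)$ is indexed by subsets of $\{1,\dots,r\}$, so a basis of $\bigotimes_{i=1}^{\rkg}\Lambda(A)$ is indexed by tuples $(S_1,\dots,S_{\rkg})$ of such subsets; the map $\mE_S$ sends such a tuple to the product $\prod_{k}\prod_{i \in S_k} t^k_i$. Because the upper index $k$ distinguishes the factors coming from different tensor slots, every generator appearing in this product is distinct, so the product is a nonzero monomial. Conversely, grouping the generators of an arbitrary monomial by their upper index recovers a unique tuple of subsets. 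Thus $\mE_S$ carries a basis bijectively (up to sign) onto the set of monomials, which is a basis of $H_*(G^r)$; since $\dim \bigotimes_{i=1}^{\rkg}\Lambda(A) = 2^{r\cdot \rkg} = \dim H_*(G^r)$, it follows that $\mE_S$ is an isomorphism and the right arrow is well-defined.

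With $\mE_S$ an isomorphism, commutativity of the square is the assertion that for every $\gamma \in \Gamma$ one has $\mE_S^{-1}\circ \mathcal{H}(G)(\gamma)\circ \mE_S = \rho(\alpha(\gamma))$. Rearranging, this reads $\mathcal{H}(G)(\gamma)\circ \mE_S = \mE_S \circ \rho(\alpha(\gamma))$, and since $\rho \circ \alpha$ is by definition the $\Gamma$-action on $\bigotimes_{i=1}^{\rkg}\Lambda(A)$, this is precisely the $\Gamma$-equivariance of $\mE_S$ recorded in Proposition \ref{equivmap}. I would therefore deduce commutativity directly from that proposition.

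The only genuine content beyond Proposition \ref{equivmap} is the bijectivity of $\mE_S$, and there the single point to watch is the graded-commutativity of the Pontryagin ring: reordering the factors of $\prod_k\prod_{i\in S_k} t^k_i$ into a chosen canonical order introduces signs. This is routine sign-bookkeeping that never yields zero, since the generators are pairwise distinct, and never causes two distinct tuples to map to the same monomial; I expect it to be the most delicate step only in the sense of requiring care with conventions, not a real obstacle.
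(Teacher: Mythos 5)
Your proof is correct and takes essentially the same route as the paper: both reduce commutativity of the square to the $\Gamma$-equivariance of $\mE_S$ from Proposition \ref{equivmap} with $S = \{1,\dots,\rkg\}$, combined with the fact that $\mE_S$ is an isomorphism. Your explicit basis-to-monomial bijection (with the sign bookkeeping) is just a more detailed justification of the paper's one-line remark that both spaces are Grassmann algebras on the same number of generators in the same dimensions.
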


\begin{proof}
This follows from Proposition \ref{equivmap} by  letting $S = \{1, \dots, \rkg \} $. the map $\mE_S$ is an isomorphism, since both spaces have the same dimension. Indeed, they are both Grassman algebras on the same number of generators in the same dimensions. 

\end{proof}

\begin{corollary}  The kernel of the representation $\mathcal{H}_G$ is precisely $IA_r$.  The image depends only on $\rkg$. 
\end{corollary}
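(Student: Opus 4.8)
The plan is to read both statements off the commutative diagram of Lemma \ref{commdiag}, once we know that the bottom horizontal map is faithful. Write $\rho \colon GL_r(\ZZ) \to GL\bigl(\bigotimes_{i=1}^{\rkg}\Lambda(A)\bigr)$ for that map, recall that the left vertical map is $\alpha$ (so $\ker\alpha = IA_r$ by definition), and note that the right vertical map is the isomorphism $GL(H_*(G^r)) \cong GL\bigl(\bigotimes_{i=1}^{\rkg}\Lambda(A)\bigr)$ obtained by conjugating by the isomorphism $\mE_S$ with $S = \{1,\dots,\rkg\}$. Commutativity then says $\mE_S^{-1}\,\mathcal{H}_G(\gamma)\,\mE_S = \rho(\alpha(\gamma))$ for all $\gamma \in \Gamma$. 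In particular, since $\mE_S$ is an isomorphism, $\mathcal{H}_G(\gamma) = \Id$ if and only if $\rho(\alpha(\gamma)) = \Id$. The inclusion $IA_r \subseteq \ker(\mathcal{H}_G)$ is then immediate: if $\gamma \in \ker\alpha$ then $\rho(\alpha(\gamma)) = \Id$, whence $\mathcal{H}_G(\gamma) = \Id$.

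The reverse inclusion carries the only real content, and it reduces to the faithfulness of $\rho$. The key observation is that the standard representation $A$ of $GL_r$ appears as a subrepresentation of the target: placing the degree-one part $\Lambda^1(A) = A$ in a single tensor factor and the degree-zero part $\Lambda^0(A) = \QQ\cdot 1$ in all remaining factors produces, under the diagonal action, a $GL_r$-submodule of $\bigotimes_{i=1}^{\rkg}\Lambda(A)$ isomorphic to $A$ (this uses $\rkg \geq 1$, which always holds for a semisimple $\mathfrak g$, together with the facts that $GL_r$ fixes $1$ and preserves $\Lambda^1$). Since the standard action of $GL_r(\ZZ)$ on $A \cong \QQ^r$ is manifestly faithful, so is $\rho$. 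Combining this with the previous paragraph, $\mathcal{H}_G(\gamma) = \Id$ forces $\rho(\alpha(\gamma)) = \Id$, hence $\alpha(\gamma) = \Id$, i.e. $\gamma \in IA_r$. Therefore $\ker(\mathcal{H}_G) = IA_r$.

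For the statement about the image, I would invoke the surjectivity of $\alpha \colon \Gamma \to GL_r(\ZZ)$ (the classical fact underlying $\Gamma/IA_r \cong GL_r(\ZZ)$). Commutativity then gives $\image(\mathcal{H}_G) = \mE_S\,\rho\bigl(GL_r(\ZZ)\bigr)\,\mE_S^{-1}$, so the image, viewed either as a subgroup up to conjugacy or as the isomorphism class of the representation, is determined by $\rho$. But $\rho$ is assembled purely from the integer $r$ and the number of tensor factors $\rkg$, with no further reference to $G$; hence the image depends only on $\rkg$. The main—though quite modest—obstacle in this argument is the faithfulness of $\rho$; once the standard-representation summand is identified, the remainder is formal diagram chasing.
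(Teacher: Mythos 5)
Your proposal is correct and follows essentially the same route as the paper: both read the result off the commutative diagram of Lemma \ref{commdiag} and reduce everything to the faithfulness of the map $GL_r(\ZZ) \to GL\bigl(\bigotimes_{i=1}^{\rkg}\Lambda(A)\bigr)$, established by locating the standard representation $A$ inside the tensor product --- exactly the ``$A \subseteq \bigotimes_{i=1}^{\rkg}\Lambda(A)$'' observation the paper invokes. Your write-up merely makes explicit the details the paper leaves terse (the $\Lambda^1$-in-one-factor embedding and the surjectivity of $\alpha$ for the image statement).
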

\begin{proof}
This follows from Lemma \ref{commdiag} and and the fact that $$\textup{ker}\left(GL_r(\mathbb{Z}) \to GL(\bigotimes_{i=1}^{\rkg} \Lambda(A) ) \right) = Id $$
because, for example $A \subseteq GL(\bigotimes_{i=1}^{\rkg} \Lambda(A) ) $
\end{proof}

\section{Describing the image and kernel of $\mathcal{H}^K(G)$}

Recall that $\Gamma_K = \{\gamma \in  \Gamma | \gamma K = K \}$. We can view the group $\Gamma_K$ as a subgroup of $\textup{Aut}(K)$. As such Lemma \ref{commdiag} gives the following corollary. 

\begin{corollary} The following diagram commutes:
\[  \xymatrix{ \Gamma_K \ar[r] \ar[d] & GL(H_*(G^{\textup{rank}(K)})) \ar[d] \\
GL_{\textup{rank}(K)}(\ZZ) \ar[r] & GL\left(\bigotimes_{i=1}^{\rkg} \Lambda \bigl( \frac{K}{[K,K]} \otimes \mathbb{Q} \bigr) \right) } \]
\end{corollary}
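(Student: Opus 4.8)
The plan is to deduce this corollary from Lemma \ref{commdiag} by applying that lemma with the free group $K$ in place of $F_r$, and then restricting the resulting diagram along the inclusion $\Gamma_K \hookrightarrow \textup{Aut}(K)$ recalled just before the statement. First I would invoke the Nielsen--Schreier theorem: since $K < F_r$ has finite index, $K$ is itself free, of finite rank $\textup{rank}(K)$. Fixing a free basis of $K$ identifies $\textup{Aut}(K)$ with the automorphism group of a free group on $\textup{rank}(K)$ generators, so that all the constructions of the preceding sections apply verbatim with $K$ in place of $F_r$ and $\textup{rank}(K)$ in place of $r$. In particular, $G^{\textup{rank}(K)}$ is identified with $\Hom(K,G)$, the group $\textup{Aut}(K)$ acts on $H_*(G^{\textup{rank}(K)})$, and Lemma \ref{commdiag} produces a commutative square
\[ \xymatrix{ \textup{Aut}(K) \ar[r] \ar[d] & GL(H_*(G^{\textup{rank}(K)})) \ar[d] \\ GL_{\textup{rank}(K)}(\ZZ) \ar[r] & GL\left(\bigotimes_{i=1}^{\rkg} \Lambda\bigl( \frac{K}{[K,K]} \otimes \mathbb{Q} \bigr) \right) } \]
in which the top map is the homology representation, the left map is the action on the abelianization $\frac{K}{[K,K]}$, and the bottom map is induced by the latter.

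Next I would restrict this square along $\Gamma_K \hookrightarrow \textup{Aut}(K)$. By the very definition of $\rho_K$, the composite $\Gamma_K \hookrightarrow \textup{Aut}(K) \to GL(H_*(G^{\textup{rank}(K)}))$ is exactly the top map of the diagram asserted in the corollary; likewise the composite $\Gamma_K \hookrightarrow \textup{Aut}(K) \to GL_{\textup{rank}(K)}(\ZZ)$ is precisely the action of $\Gamma_K$ on $\frac{K}{[K,K]}$, that is, the left map. Since a commutative square stays commutative after precomposing both of its routes with a fixed homomorphism into the upper-left corner, the restricted square commutes, which is the assertion.

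The only point requiring care --- and the closest thing to an obstacle here --- is verifying that the maps appearing in the corollary's diagram genuinely \emph{are} the restrictions of the corresponding maps of the $\textup{Aut}(K)$-square, rather than merely being abstractly isomorphic to them. This reduces to unwinding the definition of $\rho_K$, which was set up through the inclusion $\Gamma_K \to \textup{Aut}(K)$ precisely so that this compatibility holds, together with the observation that the $\Gamma_K$-action on $\frac{K}{[K,K]}$ is the restriction of the $\textup{Aut}(K)$-action. Both are immediate from the definitions, so no computation beyond Lemma \ref{commdiag} is needed; the entire content of the corollary lies in the passage to the free group $K$ via Nielsen--Schreier and the formal restriction of a commutative diagram to a subgroup.
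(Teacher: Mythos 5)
Your proposal is correct and follows exactly the route the paper takes: the paper's entire justification is the sentence preceding the statement, namely that $\Gamma_K$ embeds in $\textup{Aut}(K)$ and Lemma \ref{commdiag} applies to the free group $K$. You have merely made explicit what the paper leaves implicit (Nielsen--Schreier and the formal restriction of the commuting square to the subgroup $\Gamma_K$), which is a faithful elaboration rather than a different argument.
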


\noindent Recall that $\mathcal{H}_K(G) = \textrm{Ind}_{\Gamma_K}^{\Gamma}(\rho_K)$

\begin{corollary} \label{equivres} We have that:
$$\ker (\mathcal{H}_K(G)) = \Gamma_K \cap \textup{IA}(K) $$

\noindent Where $IA(K)$ is the subgroup of $\textup{Aut}(K)$ that acts trivially on the abelianization of $K$. \\
Furthermore, if $G'$ is another compact semisimple Lie group whose Lie algebra has the same rank as $\mathfrak{g}$ then $\mathcal{H}_K(G') \cong \mathcal{H}_K(G) $

\end{corollary}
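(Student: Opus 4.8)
The plan is to treat the two claims separately but through a single mechanism: reduce everything about the induced representation $\mathcal{H}_K(G) = \textup{Ind}_{\Gamma_K}^{\Gamma}(\rho_K)$ to statements about the representation $\rho_K$ of $\Gamma_K$, which the preceding corollary already pins down. Concretely, I would (i) compute $\ker(\rho_K)$ and observe it is independent of $G$, (ii) feed this into the standard description of the kernel of an induced representation, and (iii) feed the same corollary into the standard fact that induction preserves isomorphism of modules.

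First I would identify $\ker(\rho_K)$ using the commuting diagram of the preceding corollary. The right vertical arrow there is an isomorphism (the analogue for $K$ of the map $\mE_S$ with $S=\{1,\dots,\rkg\}$), so composing $\rho_K$ with it equals the action $\Gamma_K \to GL_{\textup{rank}(K)}(\ZZ)$ on $K/[K,K]$ followed by the bottom horizontal map into $GL\bigl(\bigotimes_{i=1}^{\rkg}\Lambda(K/[K,K]\otimes\QQ)\bigr)$. Exactly as in the corollary for $\Gamma$, this bottom map is injective: the degree-one part of a single exterior factor is a copy of $K/[K,K]\otimes\QQ$ carrying the standard, hence faithful, representation of $GL_{\textup{rank}(K)}(\ZZ)$. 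Therefore $\ker(\rho_K)$ coincides with the kernel of the action of $\Gamma_K$ on $K/[K,K]$, which is precisely $\Gamma_K \cap \textup{IA}(K)$. The decisive point is that this computation never refers to $G$, and moreover exhibits $\rho_K$, as a $\Gamma_K$-module, as the action on $\bigotimes_{i=1}^{\rkg}\Lambda(K/[K,K]\otimes\QQ)$, a model depending on $G$ only through $\rkg$.

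Next I would invoke the description of an induced representation on the finite-index subgroup $\Gamma_K < \Gamma$. Choosing coset representatives $\gamma_1,\dots,\gamma_n$ for $\Gamma/\Gamma_K$, an element $x$ acts trivially on $\textup{Ind}_{\Gamma_K}^{\Gamma}(\rho_K)$ exactly when $\gamma_i^{-1}x\gamma_i \in \ker(\rho_K)$ for every $i$; since $\ker(\rho_K)$ is normal in $\Gamma_K$, this is the normal core $\bigcap_{\gamma\in\Gamma}\gamma(\ker\rho_K)\gamma^{-1}$. As $\ker(\rho_K)=\Gamma_K\cap\textup{IA}(K)$ does not depend on $G$, neither does this core, which yields $\ker(\mathcal{H}_K(G))=\ker(\mathcal{H}_K(G'))$. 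For the final assertion, when $\textup{rank}(\mathfrak{g})=\textup{rank}(\mathfrak{g}')$ the $\Gamma_K$-modules $\rho_K$ attached to $G$ and $G'$ are both isomorphic to the action on $\bigotimes_{i=1}^{\rkg}\Lambda(K/[K,K]\otimes\QQ)$, hence to each other; since induction sends isomorphic modules to isomorphic modules, $\mathcal{H}_K(G)\cong\mathcal{H}_K(G')$.

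The one point requiring care, and the likely main obstacle, is reconciling the displayed kernel $\Gamma_K\cap\textup{IA}(K)$ with the normal core that induction actually produces: because $\Gamma_K$ is a stabilizer rather than a normal subgroup, $\Gamma_K\cap\textup{IA}(K)$ is in general not normal in $\Gamma$, so strictly $\ker(\mathcal{H}_K(G))=\bigcap_{\gamma\in\Gamma}\gamma(\Gamma_K\cap\textup{IA}(K))\gamma^{-1}$. I would therefore either record the kernel as this core, or read the statement as the identification $\ker(\rho_K)=\Gamma_K\cap\textup{IA}(K)$ that the commuting diagram supplies directly; under either reading the essential content, namely that the kernel and the isomorphism type (up to $\rkg$) are independent of $G$, is exactly what the argument above delivers.
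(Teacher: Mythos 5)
Your proposal is correct, and its skeleton is the same as the paper's: compute $\ker(\rho_K)$ from the commuting diagram for $\Gamma_K$ (the $K$-analogue of Lemma~\ref{commdiag}), note that this diagram exhibits $\rho_K$ as the $\Gamma_K$-action on $\bigotimes_{i=1}^{\rkg}\Lambda\bigl(\frac{K}{[K,K]}\otimes\QQ\bigr)$, hence depending on $G$ only through $\rkg$, and then push both claims through induction; your handling of the isomorphism claim (inducing isomorphic modules from the same subgroup gives isomorphic representations) is exactly the paper's. Where you differ is precisely at the point you flag, and there your version is the more careful one. The paper's proof argues that $\ker(\mathcal{H}_K(G))\subset\Gamma_K$ because the representation is induced, and then asserts $\ker(\mathcal{H}_K(G))\cap\Gamma_K=\textup{IA}(K)\cap\Gamma_K$, citing (by an evident typo) the very corollary being proved rather than the preceding diagram. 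That equality is only half-justified: restricting the induced module to $\Gamma_K$, the summand $1\otimes V$ (with $V$ the module of $\rho_K$) gives $\ker(\mathcal{H}_K(G))\subseteq\ker(\rho_K)=\Gamma_K\cap\textup{IA}(K)$, but the reverse inclusion fails in general, since an element of $\ker(\rho_K)$ need not act trivially on the conjugate summands $\gamma_i\otimes V$. The honest identification is the normal core $\bigcap_{\gamma\in\Gamma}\gamma\bigl(\Gamma_K\cap\textup{IA}(K)\bigr)\gamma^{-1}$, exactly as you write, and since $\Gamma_K\cap\textup{IA}(K)$ is a stabilizer-type subgroup that is generally not normal in $\Gamma$, the equality as printed is imprecise. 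Your core formulation loses nothing: it still delivers the two assertions of Theorem~\ref{generalrepresentation}, namely that $\ker(\mathcal{H}_K(G))$ is independent of $G$, and that $\mathcal{H}_K(G)\cong\mathcal{H}_K(G')$ whenever $\textup{rank}(\mathfrak{g})=\textup{rank}(\mathfrak{g}')$.
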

\begin{proof}
For the first claim, we have that since $\mathcal{H}_K(G)$ is a representation on $\Gamma$ induced from $K$, it is clear that $\ker(\mathcal{H}_K(G)) \subset \Gamma_K$. By corollary \ref{equivres}, we have that $\ker(\mathcal{H}_K(G)) \cap \Gamma_K =  \textup{IA}(K) \cap \Gamma_K$.

\noindent For the second claim, it follows directly from the diagram, since we are inducing isomorphic representations from the same groups. 
\end{proof}

\bibliographystyle{alpha}
\bibliography{ref}

\end{document}